\numberwithin{equation}{section}
\newcommand{\calD}{\mathcal{D}}
\newcommand{\calE}{\mathcal{E}}
\newcommand{\calT}{\mathcal{T}}
\newcommand{\mA}{\mathbb{A}}
\newcommand{\mC}{\mathbb{C}}
\newcommand{\mN}{\mathbb{N}}
\newcommand{\mR}{\mathbb{R}}
\newcommand{\mT}{\mathbb{T}}
\newcommand{\mZ}{\mathbb{Z}}
\newcommand{\bba}{\mathbf{a}}
\newcommand{\bbb}{\mathbf{b}}
\newcommand{\bbe}{\mathbf{e}}
\newcommand{\bbh}{\mathbf{h}}
\newcommand{\bbn}{\mathbf{n}}
\newcommand{\bbu}{\mathbf{u}}
\newcommand{\bbv}{\mathbf{v}}
\newcommand{\bbx}{\mathbf{x}}
\newcommand{\bby}{\mathbf{y}}
\newcommand{\bbz}{\mathbf{z}}
\newcommand{\bbA}{\mathbf{A}}
\newcommand{\bbB}{\mathbf{B}}
\newcommand{\bbC}{\mathbf{C}}
\newcommand{\balpha}{\bm{\alpha}}
\newtheorem{theorem}{Theorem}[section]
\newtheorem{corollary}[theorem]{Corollary}
\newtheorem{proposition}[theorem]{Proposition}
\theoremstyle{definition}
\newtheorem{remark}[theorem]{Remark}
\newcommand{\nm}{\,\rule[-.6ex]{.13em}{2.3ex}\,}
\theoremstyle{definition}
\newtheorem{definition}[theorem]{Definition}
\theoremstyle{definition}
\begin{document}

\keywords{periodic distributions, Fourier series, Bass stable rank,
  topological stable rank, B\'ezout equation}

\subjclass{Primary 46F05; Secondary 42B05, 55N15, 18F25}

\title[On the ring of periodic distributions]{On the B\'ezout equation in\\
the ring of periodic distributions}

\author[R. Rupp]{Rudolf Rupp}
\address{Fakult\"at f\"ur Angewandte Mathematik, Physik  und Allgemeinwissenschaften\\
 TH-N\"urnberg, Kesslerplatz 12\\ D-90489 N\"urnberg, Germany}
\email{Rudolf.Rupp@th-nuernberg.de}

\author[A. Sasane]{Amol Sasane}
\address{Department of Mathematics \\London School of Economics\\
     Houghton Street\\ London WC2A 2AE\\ United Kingdom}
\email{sasane@lse.ac.uk}

\begin{abstract}
  A corona type theorem is given for the ring $\calD'_\mA(\mR^d)$ of
  periodic distributions in $\mR^d$ in terms of the sequence of
  Fourier coefficients of these distributions, which have at most
  polynomial growth.  It is also shown that the Bass stable rank and
  the topological stable rank of $\calD'_\mA(\mR^d)$ are both equal to
  $1$.
\end{abstract}

\maketitle

\section{Introduction}

The aim of this short note is to study some algebraic and topological
questions associated with the ``B\'ezout equation''
$$
\bbb_1 \bba_1 +\cdots+ \bbb_N\bba_N=\bbe,
$$
where $\bbb_i, \bba_i$ ($1\leq i\leq N$) are elements of the
commutative unital topological ring
$$
(\calD'_\mA(\mR^d),+,\ast,\calT_{\calD'_\mA(\mR^d)}),
$$
defined below, and $\bbe$ denotes the identity element (which will be
the locally finite sum of Dirac distributions placed at a lattice
formed by the periods in $\mA$, as explained below). The B\'ezout
equation in rings of distributions arises in problems of robust
filtering, image processing, etc, see for example \cite{Bos}.
 
\subsection{The spaces $\calD'_\mA(\mR^d)$ and $s'(\mZ^d)$} 

For background on periodic distributions and its Fourier series
theory, we refer the reader to \cite[Chapter~16]{Dui} and
\cite[p.527-529]{Tre}.

Consider the space $s'(\mZ^d)$ of all complex valued maps on $\mZ^d$
of at most polynomial growth, that is,
$$
s'(\mZ^d):=\{\mathbf{a}: \mZ^d\rightarrow \mC\;|\;
\exists M>0 : \exists k\in \mN : 
\forall \bbn\in \mZ^d: 
|\mathbf{a}(\bbn)|\leq M (1+\nm \bbn\nm)^k\},
$$
where $\nm \cdot\nm$ denotes the $1$-norm in $\mR^d$. Then this is a
unital commutative ring $(s'(\mZ^d), +,\cdot)$ with pointwise
operations $+$ and $\cdot$, and the unit element being the constant
function
$$
\bbn\mapsto 1\quad (\bbn\in \mZ^d).
$$ 
We now equip it with a topology $\calT_{s'(\mZ^d)}$, as follows.
First, consider the locally convex topological vector space $s(\mZ^d)$
of rapidly decreasing sequences:
$$
s(\mZ^d)
:=
\Big\{\mathbf{a}: \mZ^d\rightarrow \mC\;\Big|\;
\forall k\in \mN, 
\;\sup_{\bbn\in \mZ^d} (1+\nm \bbn\nm)^k |\bba(\bbn)|<+\infty
\Big\},
$$
with pointwise operations, and with the topology given by the family
of seminorms
$$
p_k(\bbb):= 
\sup_{\bbn \in \mZ^d} (1+\nm \bbn \nm)^k |\bbb(\bbn)|,\quad 
\bbb\in s(\mZ^d),\quad k\in \mN.
$$
(That is, the topology on $s(\mZ^d)$ is the weakest one making all the
seminorms continuous.)  Note that there is a natural duality between
$s'(\mZ^d)$ and $s(\mZ^d)$, namely a bilinear form on $s'(\mZ^d)\times
s(\mZ^d) $ defined as follows: for $\bba\in s'(\mZ^d)$ and $\bbb\in
s(\mZ^d)$,
$$
\langle \bba,\bbb\rangle_{s'(\mZ^d)\times s(\mZ^d)}
:=\sum_{\bbn\in \mZ^d} \bba(\bbn)\bbb(\bbn) .
$$
Equip $s'(\mZ^d)$ by its natural weak-$\ast$ topology
$\calT_{s'(\mZ^d)}$ as a dual of $s(\mZ^d)$.  This topology
$\calT_{s'(\mZ^d)}$ can be described in terms of convergence of nets
as follows: a net $(\bba_i)_{i\in I}$ in $s'(\mZ^d)$ converges to
$\bba$ in $s'(\mZ^d)$ if and only if  for every $\bbb\in s(\mZ^d)$, we have that
$$
\lim_{i} \langle \bba_i,\bbb\rangle =\langle \bba, \bbb\rangle.
$$
Then $s'(\mZ^d)$ equipped with pointwise operations, and the above
topology $\calT_{s'(\mZ^d)}$, is a topological ring.  Moreover, this
is isomorphic as a topological ring to $\calD'_\mA(\mR^d)$ ($\simeq
\calD'(\mT^d)$), where the latter is equipped with pointwise addition
and multiplication taken as convolution, and its natural dual topology
$\calT_{\calD'_\mA(\mR^d)}$, as elaborated below.

For ${\mathbf{a}}\in {{\mathbb{R}}}^{d}$, the {\em translation
  operation} ${\mathbf{S_a}}$ on distributions in
${\mathcal{D}}'({\mathbb{R}}^d)$ is defined by
$$
\langle {\mathbf{S_a}}(T),\varphi\rangle
=
\langle T,\varphi(\cdot+{\mathbf{a}})\rangle\;
\textrm{ for all }\varphi \in {\mathcal{D}}({\mathbb{R}}^d).
$$
A distribution $T\in {\mathcal{D}}'({\mathbb{R}}^d)$ is said to be
{\em periodic with a period} $\mathbf{a}\in {\mathbb{R}}^d\setminus
\{\mathbf{0}\}$ if $T= {\mathbf{S_a}}(T)$.  Let
$$
{\mathbb{A}}:=\{{\mathbf{a_1}}, \cdots, {\mathbf{a_d}}\}
$$ 
be a linearly independent set of $d$ vectors in ${\mathbb{R}}^d$.  We
define ${\mathcal{D}}'_{{\mathbb{A}}}({\mathbb{R}}^d)$ to be the set
of all distributions $T$ that satisfy
$$
{\mathbf{S_{a_k}}}(T)=T \quad ( k=1,\cdots, d).
$$
From \cite[\S34]{Don}, $T$ is a tempered distribution, and from the
above it follows by taking Fourier transforms that
$$
(1-e^{2\pi i {\mathbf{a_k}} \cdot \mathbf{y}})\widehat{T}=0
\textrm{ for } k=1,\cdots,d.
$$ 
It can be seen that 
$$
\widehat{T}
=
\sum_{\mathbf{v} \in A^{-1} {\mathbb{Z}}^d} 
\alpha_{\mathbf{v}}(T) \delta_{\mathbf{v}},
$$
for some scalars $\alpha_{\mathbf{v}}(T)\in {\mathbb{C}}$, and where
$A$ is the matrix with its rows equal to the transposes of the column
vectors ${\mathbf{a_1}}, \cdots, {\mathbf{a_d}}$:
$$
A:= \left[ \begin{array}{ccc} 
    \mathbf{a_1}^{\top} \\ \vdots \\ \mathbf{a_d}^{\top} 
    \end{array}\right].
$$
Also, in the above, $\delta_{\mathbf{v}}$ denotes the usual Dirac
measure with support in $\mathbf{v}$:
$$
\langle \delta_{\mathbf{v}}, \psi\rangle 
=\psi (\mathbf{v}) 
\;\textrm{ for }\psi \in {\mathcal{D}}({\mathbb{R}}^d).
$$
Then the Fourier coefficients $\alpha_{ \bbv}(T)$ give rise to an
element in $s'(\mZ^d)$, and vice versa, every element in $s'(\mZ^d)$
is the set of Fourier coefficients of some periodic distribution. In
this manner the topological ring
$$
(\calD'_{\mA}(\mR^d),+,\ast,\calT_{\calD'_{\mA}(\mR^d)})
$$
of periodic distributions on $\mR^d$ is isomorphic (as topological
rings) to
$$
(s'(\mZ^d),+,\cdot,\calT_{s'(\mZ^d)}).
$$
In the sequel we will mostly just consider the topological ring
$$
(s'(\mZ^d),+,\cdot,\calT_{s'(\mZ^d)})
$$ 
while stating our results and
demonstrating proofs, with the tacit understanding that analogous
results also hold for
 $(\calD'_{\mA}(\mR^d),+,\ast,\calT_{\calD'_{\mA}(\mR^d)})$.

\subsection{Main results} 

We will prove three results (Theorem~\ref{theorem_corona},
\ref{theorem_bsr} and \ref{theorem_tsr}), listed below:

\begin{theorem}
\label{theorem_corona}
Let $\mathbf{a}_1,\cdots, \mathbf{a}_N\in s'(\mZ^d)$. Then the
following are equivalent:
\begin{enumerate}
\item There exist $\mathbf{b}_1,\cdots, \mathbf{b}_N\in s'(\mZ^d)$
  such that
 $$
 \mathbf{b}_1 \mathbf{a}_1 +\cdots +\mathbf{b}_N \mathbf{a}_N =1
 $$
\item There exists a $\delta >0$ and a $K\in \mN$ such that
 $$
 \forall \bbn\in \mZ^d, \; 
 |\mathbf{a}_1(\bbn)|+\cdots+  |\mathbf{a}_N(\bbn)|
 \geq \delta (1+\nm \bbn \nm)^{-K}.
 $$
\end{enumerate}
\end{theorem}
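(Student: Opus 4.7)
The plan is to verify the two implications separately. The easy direction is $(1) \Rightarrow (2)$, which follows immediately from the polynomial growth of the $\bbb_i$; the substantive content is $(2) \Rightarrow (1)$, handled by an explicit pointwise construction.

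For $(1) \Rightarrow (2)$, I would use that each $\bbb_i \in s'(\mZ^d)$ admits constants $M > 0$ and $K \in \mN$ such that $|\bbb_i(\bbn)| \leq M(1 + \nm \bbn \nm)^K$ uniformly in $i$ and $\bbn$ (one can take the maximum of the individual constants). Evaluating $\bbb_1\bba_1 + \cdots + \bbb_N\bba_N = 1$ at $\bbn$ and applying the triangle inequality immediately yields the desired lower bound with the same exponent $K$ and a suitable $\delta > 0$.

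For $(2) \Rightarrow (1)$, the key observation is that $s'(\mZ^d)$ is a ring of functions on a discrete set subject only to a polynomial growth constraint, so pointwise division is legal provided the divisor decays no faster than polynomially. Concretely, I would set
\[
\bbb_i(\bbn) := \frac{\overline{\bba_i(\bbn)}}{\sum_{j=1}^{N} |\bba_j(\bbn)|^2},
\]
which is well-defined by hypothesis $(2)$ and satisfies $\sum_i \bbb_i(\bbn)\bba_i(\bbn) = 1$ pointwise by construction. The only remaining task is to verify that each $\bbb_i$ itself lies in $s'(\mZ^d)$. This is a one-line Cauchy--Schwarz estimate: the inequality $\big(\sum_j |\bba_j(\bbn)|\big)^2 \leq N \sum_j |\bba_j(\bbn)|^2$ together with the trivial bound $|\bba_i(\bbn)| \leq \sum_j |\bba_j(\bbn)|$ gives $|\bbb_i(\bbn)| \leq N\big/\sum_j |\bba_j(\bbn)|$, and hypothesis $(2)$ converts this into a polynomial growth bound of order $K$.

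I do not anticipate a genuine obstacle. In contrast with classical corona-type theorems for holomorphic function algebras, where the hard step is producing a B\'ezout tuple that respects the analytic or smoothness structure, the space $s'(\mZ^d)$ imposes such lax constraints on its elements that the naive ``conjugate-over-squared-modulus'' partition-of-unity formula works directly; the translation of the result back to $\calD'_{\mA}(\mR^d)$ is then automatic from the topological ring isomorphism recorded in the introduction.
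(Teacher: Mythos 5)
Your proof is correct and follows essentially the same route as the paper: an immediate triangle-inequality argument for $(1)\Rightarrow(2)$, and an explicit pointwise division for $(2)\Rightarrow(1)$. The only difference is cosmetic --- the paper uses $\bbb_i(\bbn)=e^{-i\mathrm{Arg}(\bba_i(\bbn))}\big/\sum_j|\bba_j(\bbn)|$ where you use $\overline{\bba_i(\bbn)}\big/\sum_j|\bba_j(\bbn)|^2$, and your Cauchy--Schwarz step correctly reduces the latter to the same polynomial lower bound on $\sum_j|\bba_j(\bbn)|$.
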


\noindent In light of the isomorphism between $\calD'_\mA(\mR^d)$ and
$s'(\mZ^d)$, this result can be viewed as an analogue of a result for
the solvability of the B\'ezout equation in the ring $\calE'(\mR^d)$
of compactly supported distributions recalled below (see for instance
\cite[Corollary~3.1]{MS}):

\begin{proposition}
\label{prop_MS_IEOT}
Let $f_1,\cdots ,f_N\in \calE'(\mR^d)$. Then the following are
equivalent:
\begin{enumerate}
\item There exist $g_1,\cdots ,g_N\in \calE'(\mR^d)$ such that
$$
f_1 \ast g_1 +\cdots + f_N \ast  g_N
=
\delta.
$$
\item There are positive constants $\delta , K, M$ such that for all
  $\bbz \in \mC^d$,
$$
|\widehat{f}_1(\bbz)|+\cdots + |\widehat{f}_N(\bbz)|\geq 
\delta (1 +\nm \bbz\nm)^{-K}e^{-M\nm \textrm{\em Im}(\bbz)\nm }.
$$
\end{enumerate}
\end{proposition}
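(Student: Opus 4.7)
The plan is to treat the two implications separately, with the Fourier--Laplace transform as the principal bridge. Throughout, for $f\in\calE'(\mR^d)$, I denote by $\widehat{f}$ its Fourier--Laplace transform, which by the Paley--Wiener--Schwartz theorem is an entire function on $\mC^d$ satisfying a bound of the form
$$
|\widehat{f}(\bbz)|\leq C(1+\nm \bbz\nm)^{K} e^{M\nm \textrm{Im}(\bbz)\nm}
$$
for some $C,K,M>0$; conversely, any entire function obeying such a bound is the Fourier--Laplace transform of a (unique) element of $\calE'(\mR^d)$.

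For $(1)\Rightarrow(2)$, I apply the Fourier transform to $f_1\ast g_1+\cdots+f_N\ast g_N=\delta$, which turns convolution into multiplication and the Dirac mass into the constant function $1$. This yields
$$
\widehat{f}_1(\bbz)\,\widehat{g}_1(\bbz)+\cdots+\widehat{f}_N(\bbz)\,\widehat{g}_N(\bbz)=1,\qquad \bbz\in\mC^d.
$$
Applying Paley--Wiener--Schwartz to each $g_j$ and estimating crudely gives
$$
1\leq \Big(\max_{1\leq j\leq N}|\widehat{g}_j(\bbz)|\Big)\sum_{j=1}^{N}|\widehat{f}_j(\bbz)|\leq C_0(1+\nm \bbz\nm)^{K_0} e^{M_0\nm \textrm{Im}(\bbz)\nm}\sum_{j=1}^{N}|\widehat{f}_j(\bbz)|,
$$
where $C_0,K_0,M_0$ are maxima of the Paley--Wiener--Schwartz constants of the $\widehat{g}_j$. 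Dividing yields the lower bound in (2).

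For $(2)\Rightarrow(1)$, the strategy is to construct entire functions $G_1,\ldots,G_N$ on $\mC^d$ satisfying $\sum_j \widehat{f}_j G_j\equiv 1$ together with Paley--Wiener--Schwartz bounds, and then recover $g_j\in\calE'(\mR^d)$ with $\widehat{g}_j=G_j$. Following H\"ormander's approach to smooth division, I first form the $C^\infty$ candidates
$$
h_j:=\frac{\overline{\widehat{f}_j}}{\sum_{k=1}^{N}|\widehat{f}_k|^2},
$$
which satisfy $\sum_j\widehat{f}_j h_j\equiv 1$ but fail to be holomorphic. I then correct them by setting
$$
G_j:=h_j+\sum_{k=1}^{N}\widehat{f}_k\, u_{jk},
$$
where $C^\infty$ functions $u_{jk}$ are chosen antisymmetric ($u_{jk}=-u_{kj}$) and satisfy the $\bar\partial$-system
$$
\bar\partial u_{jk}=h_j\,\bar\partial h_k-h_k\,\bar\partial h_j.
$$
A direct computation then shows $\bar\partial G_j\equiv 0$, so each $G_j$ is entire, while antisymmetry makes the correction terms cancel in the sum, preserving $\sum_j\widehat{f}_j G_j\equiv 1$.

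The crux is to solve these $\bar\partial$-equations with growth estimates strong enough to translate into a Paley--Wiener--Schwartz bound on each $G_j$. I would invoke H\"ormander's $L^2$ solvability theorem for $\bar\partial$ on $\mC^d$ with a plurisubharmonic weight of the form
$$
\varphi(\bbz)=A\log(1+\nm \bbz\nm^2)+B\nm \textrm{Im}(\bbz)\nm,
$$
where $A,B>0$ are tuned so that, thanks to the lower bound in (2) and the Paley--Wiener--Schwartz estimates on the $\widehat{f}_j$, every right-hand side $h_j\,\bar\partial h_k-h_k\,\bar\partial h_j$ is square-integrable against $e^{-\varphi}$. The resulting weighted $L^2$ bound on $u_{jk}$ is upgraded to a pointwise bound via a mean-value estimate over small polydiscs, yielding a Paley--Wiener--Schwartz estimate on $G_j$ of the required form. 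This passage from weighted $L^2$ to pointwise exponential-type bounds, and the careful book-keeping of the exponents $K$ and $M$ from (2) through all the estimates, is the main technical obstacle; everything else reduces to algebraic manipulation with the Fourier transform.
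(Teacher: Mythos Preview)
The paper does not actually prove this proposition; it is quoted verbatim as a known result from \cite[Corollary~3.1]{MS} (building on H\"ormander's corona-type theorem \cite{Hor}), and serves only as motivation and analogue for Theorem~\ref{theorem_corona}. So there is no ``paper's own proof'' to compare against.

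That said, your sketch is the standard and correct route. The implication $(1)\Rightarrow(2)$ is exactly as you wrote. For $(2)\Rightarrow(1)$ your Koszul-complex/$\bar\partial$-correction argument is precisely H\"ormander's method: the smooth candidates $h_j=\overline{\widehat{f}_j}/\sum_k|\widehat{f}_k|^2$, the antisymmetric correctors $u_{jk}$, the verification that $\bar\partial G_j=0$ and $\sum_j\widehat{f}_j G_j=1$, and the use of weighted $L^2$-estimates for $\bar\partial$ with a plurisubharmonic weight of the shape $A\log(1+\nm\bbz\nm^2)+B\nm\textrm{Im}(\bbz)\nm$ are all standard and work as stated. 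Two small points worth flagging if you ever write this out in full: (i) $\nm\textrm{Im}(\bbz)\nm$ is plurisubharmonic but not smooth, so depending on which version of H\"ormander's $L^2$ theorem you invoke you may want to regularize it; (ii) the data $h_j\,\bar\partial h_k-h_k\,\bar\partial h_j$ must be checked to be $\bar\partial$-closed (they are, by a short computation using $\sum_k\widehat{f}_k h_k=1$), since in several variables the $\bar\partial$-equation is overdetermined. With those caveats, your outline is complete and matches the argument in the cited references.
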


\noindent Our second main result is the determination of the Bass
stable rank of $s'(\mZ^d)$. This notion of stable rank was introduced
by Hyman Bass in order to facilitate some computations in algebraic
$K$-theory where it plays a role analogous to dimension; see
\cite{Bas},

\begin{definition}[Bass stable rank]
  Let $R$ be a commutative unital ring with identity element $1$. We
  assume that $1\not=0$, that is $R$ is not the trivial ring $\{0\}$.

\begin{enumerate}
\item An $N$-tuple $(\bba_1,\cdots,\bba_N)\in R^N$ is said to be {\it
    invertible} (or {\it unimodular}), if there exists
  $(\bbb_1,\cdots,\bbb_N)\in R^N$ such that the B\'ezout equation
 $$
  \bbb_1 \bba_1+\cdots+ \bbb_N\bba_N =1
 $$
 is satisfied.
   The set of all invertible $N$-tuples is denoted by $U_N(R)$. 
   
 \item An $(N+1)$-tuple $(\bba_1,\cdots,\bba_N,\balpha)\in U_{N+1}(R)$
   is called {\it reducible} if there exists
   $(\bbh_1,\cdots,\bbh_N)\in R^N$ such that
 $$
 (\bba_1+\bbh_1 \balpha,\cdots, \bba_N+\bbh_N\balpha )\in U_N(R).
 $$

\item The {\it Bass stable rank} of $R$, denoted by $\textrm{bsr }R$,
  is the smallest integer $N$ such that every element in $U_{N+1}(R)$
  is reducible.  If no such $N$ exists, then $\textrm{bsr }R:=\infty$.
\end{enumerate}
\end{definition}

\noindent Here is the reason for taking the smallest such number $N$: 
if every $(N+1)$-tuple is reducible, then also every $(N+k)$-tuple, 
$k \geq 1$, is reducible,  see \cite{Bas}. With this terminology, our second main 
result is the following:
  
\begin{theorem}
\label{theorem_bsr}
The Bass stable rank of $s'(\mZ^d)$ is $1$.
\end{theorem}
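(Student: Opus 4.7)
The plan is to use Theorem~\ref{theorem_corona} to translate everything into pointwise estimates on Fourier coefficients. First, an element $\bba\in s'(\mZ^d)$ lies in $U_1(s'(\mZ^d))$ (is a unit) precisely when $|\bba(\bbn)|\geq \delta(1+\nm \bbn\nm)^{-K}$ for some $\delta>0$ and $K\in\mN$; this is the $N=1$ case of Theorem~\ref{theorem_corona}. The lower bound $\textrm{bsr }s'(\mZ^d)\geq 1$ is automatic because $s'(\mZ^d)$ is non-trivial, so the content is to show every $(\bba_1,\bba_2)\in U_2(s'(\mZ^d))$ is reducible: namely, to exhibit $\bbh\in s'(\mZ^d)$ with $\bba_1+\bbh\bba_2\in U_1(s'(\mZ^d))$.

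By Theorem~\ref{theorem_corona} applied to $(\bba_1,\bba_2)$, there exist $\delta>0$ and $K\in\mN$ with
$$
|\bba_1(\bbn)|+|\bba_2(\bbn)|\geq \delta(1+\nm \bbn\nm)^{-K}\qquad(\bbn\in\mZ^d).
$$
Partition $\mZ^d=S_1\sqcup S_2$, where $S_1:=\{\bbn\in\mZ^d:|\bba_2(\bbn)|\leq \tfrac{\delta}{2}(1+\nm \bbn\nm)^{-K}\}$ and $S_2$ is its complement. Then on $S_1$ the triangle inequality gives $|\bba_1(\bbn)|\geq \tfrac{\delta}{2}(1+\nm \bbn\nm)^{-K}$, while on $S_2$ one has $|\bba_2(\bbn)|>\tfrac{\delta}{2}(1+\nm \bbn\nm)^{-K}$.

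Define $\bbh:\mZ^d\to\mC$ by $\bbh(\bbn):=0$ for $\bbn\in S_1$ and $\bbh(\bbn):=(1-\bba_1(\bbn))/\bba_2(\bbn)$ for $\bbn\in S_2$. The lower bound on $|\bba_2|$ over $S_2$ combined with the polynomial growth $|\bba_1(\bbn)|\leq M(1+\nm \bbn\nm)^k$ yields a polynomial estimate $|\bbh(\bbn)|\leq C(1+\nm \bbn\nm)^{k+K}$, so $\bbh\in s'(\mZ^d)$. By construction, $(\bba_1+\bbh\bba_2)(\bbn)=\bba_1(\bbn)$ for $\bbn\in S_1$ and $=1$ for $\bbn\in S_2$, so on the whole of $\mZ^d$,
$$
|(\bba_1+\bbh\bba_2)(\bbn)|\geq \min\!\left(\tfrac{\delta}{2},\,1\right)(1+\nm \bbn\nm)^{-K}.
$$
By the $N=1$ form of Theorem~\ref{theorem_corona} this forces $\bba_1+\bbh\bba_2\in U_1(s'(\mZ^d))$, establishing reducibility and hence $\textrm{bsr }s'(\mZ^d)\leq 1$.

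There is no substantive obstacle to this argument: the pointwise multiplication on $s'(\mZ^d)$ allows a purely local, coefficient-by-coefficient construction of $\bbh$, and Theorem~\ref{theorem_corona} supplies the uniform polynomial lower bound that is the only non-trivial ingredient. The one point requiring a little care is the choice of splitting threshold $\tfrac{\delta}{2}(1+\nm \bbn\nm)^{-K}$, which is calibrated precisely to ensure that dividing by $\bba_2$ on $S_2$ does not destroy polynomial growth of $\bbh$.
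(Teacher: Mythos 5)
Your proof is correct, but it takes a genuinely different route from the paper's. You use \emph{both} directions of Theorem~\ref{theorem_corona}: the necessity of the corona condition for the unimodular pair $(\bba_1,\bba_2)$ gives you the uniform lower bound $|\bba_1(\bbn)|+|\bba_2(\bbn)|\geq \delta(1+\nm\bbn\nm)^{-K}$, you partition $\mZ^d$ according to which coordinate carries the bound, and you solve for $\bbh$ by literal pointwise division on the set where $\bba_2$ is large; sufficiency for $N=1$ then certifies that $\bba_1+\bbh\bba_2$ is a unit. The paper instead never invokes the necessity direction and never divides by $\bba_2$: it normalizes $\bba_1$ by the unit $\bbu_1=1+|\bba_1|$ to get $|\bbA_1|\leq 1$, perturbs the B\'ezout coefficient $\bbB_1$ to an invertible $\widetilde{\bbB}_1$ with $|\widetilde{\bbB}_1-\bbB_1|\leq 2\epsilon$, and observes that $\widetilde{\bbB}_1\bbA_1+\bbb_2\bba_2=1+(\widetilde{\bbB}_1-\bbB_1)\bbA_1$ is bounded below by $1-2\epsilon$, hence a unit; then $\bbh:=\widetilde{\bbB}_1^{-1}\bbu_1\bbb_2$ works. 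The trade-off: your argument is shorter and more transparent for $s'(\mZ^d)$ itself, since the corona theorem gives an exact pointwise characterization of unimodularity and the ring is closed under arbitrary piecewise definitions of polynomial growth; the paper's argument uses only the sufficiency half of Theorem~\ref{theorem_corona} (invertibility of elements bounded below) and is therefore the one that generalizes to the abstract setting of the subsequent Remark, where a ring $R\subset c(\mZ^d)$ satisfies only (P1), (P2) (or (P$2'$)) and (P3) and no full corona characterization is assumed. Both proofs are complete and correct.
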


As an immediate consequence of this result, we also have that $s'(\mZ^d)$ is a 
Hermite ring, as elaborated below. 

\begin{definition}[Hermite ring]
A  commutative unital ring $R$ is called a {\em Hermite ring} if for every $N\in \mN$, 
and every $ (\bba_1,\cdots, \bba_N )\in U_N(R)$, there exists a $N\times N$ matrix 
$\bbA\in R^{N\times N}$, which is invertible as an element of $R^{N\times N}$, and such that 
 its first column entries $\bbA_{i1}$ coincide with $\bba_i$:
 $$
 \bbA_{i1}=\bba_i\quad (i=1,\cdots, N).
 $$
\end{definition}

\begin{corollary}
 $s'(\mZ^d)$ is a Hermite ring. 
\end{corollary}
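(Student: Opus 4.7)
The plan is to derive the Hermite property directly from Theorem~\ref{theorem_bsr} by induction on $N$. For $N=1$, unimodularity of $(\bba_1)$ forces $\bba_1$ to be a unit in $s'(\mZ^d)$, so the $1\times 1$ matrix $[\bba_1]$ is itself the required completion. For $N\geq 2$, I would first invoke bsr $=1$: because reducibility of $(N+1)$-tuples propagates to all longer tuples (as recorded after the definition of bsr), the tuple $(\bba_1,\ldots,\bba_N)\in U_N(s'(\mZ^d))$ is reducible. This yields $\bbh_1,\ldots,\bbh_{N-1}\in s'(\mZ^d)$ such that
$$
(\bba_1+\bbh_1\bba_N,\,\ldots,\,\bba_{N-1}+\bbh_{N-1}\bba_N)\in U_{N-1}(s'(\mZ^d)).
$$
By the induction hypothesis, this shorter tuple is the first column of some invertible $(N-1)\times(N-1)$ matrix $\bbB$ over $s'(\mZ^d)$.

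Next I would build the $N\times N$ block matrix
$$
\bbA' := \begin{pmatrix} \bbB & \mathbf{0} \\ \bba_N\,\bbe_1^{\top} & 1 \end{pmatrix},
$$
where $\bbe_1=(1,0,\ldots,0)^{\top}\in s'(\mZ^d)^{N-1}$, so that its $(N,1)$-entry is $\bba_N$ while the remaining new entries are $0$ or $1$ as displayed. Expanding along the last column gives $\det\bbA' = \det\bbB$, which is a unit in $s'(\mZ^d)$; hence $\bbA'$ is invertible, with first column $(\bba_1+\bbh_1\bba_N,\ldots,\bba_{N-1}+\bbh_{N-1}\bba_N,\bba_N)^{\top}$. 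Left-multiplication by the invertible elementary matrix
$$
E := \begin{pmatrix} I_{N-1} & -\bbh \\ \mathbf{0}^{\top} & 1 \end{pmatrix}, \qquad \bbh = (\bbh_1,\ldots,\bbh_{N-1})^{\top},
$$
then restores the first column to $(\bba_1,\ldots,\bba_N)^{\top}$ while preserving invertibility, so $\bbA := E\bbA'$ is the desired completion.

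The argument is purely ring-theoretic and uses nothing specific to $s'(\mZ^d)$ beyond Theorem~\ref{theorem_bsr}; the substantive content of the corollary lies entirely in the bsr computation, and the passage from bsr $=1$ to the Hermite property is a standard algebraic maneuver. I therefore do not anticipate any serious obstacle here.
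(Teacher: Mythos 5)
Your proof is correct, but it takes a different route from the paper: the paper's entire proof is a citation of the standard fact that a commutative unital ring of Bass stable rank at most $2$ is Hermite (Lam, Theorem~20.13), whereas you reprove the implication ``$\textrm{bsr}=1 \Rightarrow$ Hermite'' from scratch by induction on $N$. Your induction is sound: the base case $N=1$ correctly uses that a unimodular singleton is a unit; for $N\geq 2$ the propagation remark after the definition of bsr does give reducibility of every $N$-tuple, the block matrix $\bbA'$ is invertible since its determinant equals the unit $\det\bbB$, and left multiplication by $E$ visibly restores the first column to $(\bba_1,\ldots,\bba_N)^{\top}$. The trade-off is that the citation is shorter and covers the more general hypothesis $\textrm{bsr}\leq 2$ (where the induction would bottom out at $N=2$ and require the separate observation that a unimodular pair $(\bba,\bbb)$ with $\bbx\bba+\bby\bbb=1$ is completed by the matrix with rows $(\bba,-\bby)$ and $(\bbb,\bbx)$), while your argument is self-contained, purely ring-theoretic, and makes the completing matrix explicit. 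Since the paper establishes $\textrm{bsr}=1$, your weaker general statement fully suffices for the corollary.
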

\begin{proof} This follows from the known result that commutative unital rings having 
stable rank at most $2$ are Hermite (see for example \cite[Theorem~20.13, p. 301]{Lam}). 
\end{proof}

\noindent Finally, we will also determine the topological stable rank
of the topological ring $s'(\mZ^d)$. Marc~Rieffel, in the seminal paper \cite{Rie},
introduced a notion of topological stable rank, analogous to the
concept of Bass stable rank, for $C^*$-algebras. Motivated by his
definition, one can more generally consider topological rings
instead of only $C^*$-algebras, and consider the following
generalization of his notion of topological stable rank.
 
\begin{definition}[Topological stable rank] 
  Let $R$ be a commutative unital topological ring with topology
  $\mathcal T$.  The {\it topological stable rank}, ${\rm
    tsr }R$, of $(R,\mathcal T)$ is the least integer $N$
  for which $U_N(R)$ is dense in $R^N$. If no such $N$ exists, then 
  $\textrm{tsr }R:=\infty$.
\end{definition}

\noindent Our third and final main result is:
  
\begin{theorem}
\label{theorem_tsr}
The topological stable rank of $s'(\mZ^d)$ is $1$. 
\end{theorem}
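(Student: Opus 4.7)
The plan is to directly show that $U_1(s'(\mZ^d))$ is dense in $s'(\mZ^d)$, from which the conclusion follows (since $s'(\mZ^d)$ is nontrivial, $\textrm{tsr}\, s'(\mZ^d) \geq 1$ is automatic). The first step is to record the concrete characterization: $\bba \in U_1(s'(\mZ^d))$ iff $\bba(\bbn) \neq 0$ for every $\bbn\in\mZ^d$ and $1/\bba$ has at most polynomial growth (equivalently, by the $N=1$ case of Theorem~\ref{theorem_corona}, there exist $\delta>0$ and $K\in\mN$ with $|\bba(\bbn)|\geq \delta (1+\nm\bbn\nm)^{-K}$). So it is enough, given any $\bba\in s'(\mZ^d)$ and any basic weak-$*$ neighborhood
$$
V=\{\bbc\in s'(\mZ^d)\; :\; |\langle \bbc-\bba,\bbb_j\rangle|<\epsilon,\ j=1,\ldots,m\},
$$
with $\bbb_1,\ldots,\bbb_m\in s(\mZ^d)$, to produce an element of $V$ that is pointwise bounded below by a positive constant.

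For the construction, I would fix a parameter $\delta_0>0$ (to be tuned later) and define
$$
\bba'(\bbn):=\bba(\bbn)+\eta(\bbn),\qquad
\eta(\bbn):=\begin{cases} 0 & \text{if } |\bba(\bbn)|\geq \delta_0,\\ 2\delta_0 & \text{if } |\bba(\bbn)|<\delta_0.\end{cases}
$$
The reverse triangle inequality gives $|\bba'(\bbn)|\geq \delta_0$ for every $\bbn$; hence $1/\bba'\in \ell^\infty(\mZ^d)\subset s'(\mZ^d)$, so $\bba'\in U_1(s'(\mZ^d))$. Since $|\eta(\bbn)|\leq 2\delta_0$ everywhere,
$$
|\langle \bba'-\bba,\bbb_j\rangle| \;\leq\; 2\delta_0 \sum_{\bbn\in\mZ^d} |\bbb_j(\bbn)|,
$$
and the $\ell^1$-sum is finite because each $\bbb_j\in s(\mZ^d)$ decays faster than any polynomial (dominate by $C(1+\nm\bbn\nm)^{-d-1}$). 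Choosing $\delta_0<\epsilon/(2\max_j \|\bbb_j\|_{\ell^1})$ (a $\bbb_j=0$ imposes no constraint) puts $\bba'$ in $V$, and density is proved.

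I do not foresee a real obstacle: the argument is soft and exploits only that weak-$*$ neighborhoods are cut out by finitely many rapidly decreasing functionals, and that adding a sufficiently large constant to each offending coordinate lifts the absolute value above a uniform threshold. The only point worth caution is that the perturbation $\eta$ need not itself be in $s(\mZ^d)$, but it belongs to $\ell^\infty(\mZ^d)\subset s'(\mZ^d)$, so $\bba'\in s'(\mZ^d)$ and all expressions above are well defined.
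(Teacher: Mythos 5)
Your proposal is correct and is essentially the paper's own argument: both perturb $\bba$ only on the set where $|\bba(\bbn)|$ is small, by an amount bounded uniformly by a multiple of the cutoff parameter, obtain a unit via the resulting uniform pointwise lower bound, and then use the $\ell^1$-summability of elements of $s(\mZ^d)$ to conclude weak-$*$ approximation. The only cosmetic differences are that the paper sets the small values equal to $\epsilon$ rather than adding $2\delta_0$ to them, and phrases convergence via a net indexed by $\epsilon$ rather than via basic neighborhoods.
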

  
\noindent The organization of the paper is as follows: In
Sections~\ref{section_corona}, \ref{section_bsr}, \ref{section_tsr},
we give the proofs of Theorems~\ref{theorem_corona},
\ref{theorem_bsr}, \ref{theorem_tsr}, respectively.

\section{Corona type theorem}
\label{section_corona}

\begin{proof}[Proof of Theorem~\ref{theorem_corona}] $\;$

\noindent (1)$\Rightarrow$(2): Suppose there exist
  $\mathbf{b}_1,\cdots, \mathbf{b}_N\in s'(\mZ^d)$ such that
 $$
 \mathbf{b}_1 \mathbf{a}_1 +\cdots +\mathbf{b}_N \mathbf{a}_N =1. 
 $$
 Then we can choose a large enough $K\in \mN$ and $M>0$ such that for
 all $\bbn \in \mZ^d$ and $i=1,\cdots,N$,
 $$
 |\bbb_i(\bbn)|\leq M(1+\nm \bbn \nm)^K.
 $$
 Then for all $\bbn\in \mZ^d$, we have 
\begin{eqnarray*}
1
&=&
|1|=
|\mathbf{b}_1(\bbn) \mathbf{a}_1(\bbn) +\cdots +\mathbf{b}_N(\bbn) \mathbf{a}_N(\bbn)|
\\
&\leq & 
|\mathbf{b}_1(\bbn) ||\mathbf{a}_1(\bbn)| +\cdots +|\mathbf{b}_N(\bbn)|| \mathbf{a}_N(\bbn)|
\\
&\leq & 
M(1+\nm \bbn \nm)^K (|\mathbf{a}_1(\bbn)| +\cdots +| \mathbf{a}_N(\bbn)|),
\end{eqnarray*}
and so the corona type condition follows by a rearrangement (and with
$\delta:=1/M$).

\medskip 

\noindent (2)$\Rightarrow$(1): From the corona type condition, it
follows that
$$
\bbn\mapsto \frac{1}{ |\mathbf{a}_1(\bbn)|+\cdots+  |\mathbf{a}_N(\bbn)|}
$$
is an element of $s'(\mZ^d)$. Next, for $i=1,\cdots,N$ and $\bbn\in
\mZ^d$, define
$$
\mathbf{b}_i(\bbn):=
\frac{e^{-i\textrm{Arg}(\mathbf{a}_i(\bbn))}}{ |\mathbf{a}_1(\bbn)|+\cdots+  |\mathbf{a}_N(\bbn)|},
$$
where $\textrm{Arg}$ denotes the principal argument of a complex
number, living in $(-\pi,\pi]$, and is taken as $0$ for the complex
number $0$.  Then we see that $\mathbf{b}_1,\cdots, \mathbf{b}_N\in
s'(\mZ^d)$ and
$$
\mathbf{b}_1 \mathbf{a}_1 +\cdots +\mathbf{b}_N \mathbf{a}_N =1
$$
This completes the proof.
\end{proof}

\section{Bass stable rank of $s'(\mZ^d)$} 
\label{section_bsr}

\begin{proof}[Proof of Theorem~\ref{theorem_bsr}] Suppose that
  $\mathbf{a}_1,\mathbf{a}_2,\mathbf{b}_1,\mathbf{b}_2$ are elements
  of $s'(\mZ^d)$ such that
$$
\mathbf{b}_1\mathbf{a}_1+\mathbf{b}_2\mathbf{a}_2=1.
$$
Define $\mathbf{u}_1\in s'(\mZ^d)$ by
$\mathbf{u}_1(\bbn)=1+|\bba_1(\bbn)|$, $\bbn\in \mZ^d$. Then by using
Theorem~\ref{theorem_corona}, we can see that $\bbu_1$ is invertible
in $s'(\mZ^d)$. Set $\bbA_1:=\bba_1 \bbu_1^{-1}$, that is,
$$
\bbA_1(\bbn)=\frac{\bba_1(\bbn)}{1+|\bba_1(\bbn)|}\quad (\bbn\in \mZ^d).
$$
We note that $|\bbA_1(\bbn)|\leq 1$ for all $\bbn\in \mZ^d$. Define
$\bbB_1\in s'(\mZ^d)$ by $\bbB_1=\bbb_1\bbu_1$. Then $\bbb_1
\bba_1=\bbB_1 \bbA_1$, and so
$$
1=\bbb_1 \bba_1+\bbb_2\bba_2=\bbB_1\bbA_1+\bbb_2\bba_2.
$$
Define $\widetilde{\bbB}_1$ by 
$$
\widetilde{\bbB}_1(\bbn)
=\left\{\begin{array}{ll} 
 \epsilon & \textrm{if } |\bbB_1(\bbn)|\leq \epsilon,\\
 \bbB_1(\bbn) & \textrm{if } |\bbB_1(\bbn)|> \epsilon,
\end{array}\right.
$$
where the $\epsilon>0$ will be determined later. Then clearly
$\widetilde{\bbB}_1\in s'(\mZ^d)$, and in fact is invertible in
$s'(\mZ^d)$ because it is bounded below by $\epsilon$.  Moreover,
$\widetilde{\bbB}_1$ ``approximates'' $\bbB_1$ pointwise:
$$
|\widetilde{\bbB}_1(\bbn)-\bbB_1(\bbn)|
\leq 2\epsilon \quad (\bbn\in \mZ^d).
$$
We also note that 
$$
\widetilde{\bbB}_1 \bbA_1 + \bbb_2 \bba_2
= \widetilde{\bbB}_1 \bbA_1 +(1-\bbB_1 \bbA_1)
=1+(\widetilde{\bbB}_1-\bbB_1)\bbA_1.
$$
Since for all  $\bbn\in \mZ^d $ we have that 
$$
|(1+(\widetilde{\bbB}_1-\bbB_1)\bbA_1)(\bbn)|
\geq 
1-2\epsilon \cdot 1 
= 1-2\epsilon \stackrel{(\epsilon:=1/4)}{=} 
\frac{1}{2},
$$
for the choice $\epsilon=1/4$, it follows by
Theorem~\ref{theorem_corona}, that
$1+(\widetilde{\bbB}_1-\bbB_1)\bbA_1$ is an invertible element in
$s'(\mZ^d)$.

Now with $\bbh\in s'(\mZ^d)$ defined by $ \bbh :=
\widetilde{\bbB}_1^{-1} \bbu_1 \bbb_2 , $ we have
\begin{eqnarray*}
\bba_1 +\bbh \bba_2 
&=&
\widetilde{\bbB}_1^{-1}\bbu_1 
( \widetilde{\bbB}_1 \bbu_1^{-1} \bba_1 +    \bbb_2\bba_2 )
\\
&=&
\widetilde{\bbB}_1^{-1}\bbu_1 
(\widetilde{\bbB}_1\bbA_1+    \bbb_2\bba_2 )
\\
&=&
\widetilde{\bbB}_1^{-1}\bbu_1 
(1+(\widetilde{\bbB}_1-\bbB_1)\bbA_1).
\end{eqnarray*}
As the last expression on the right hand side is a product of
invertibles from $s'(\mZ^d)$, it follows that $\bba_1 +\bbh \bba_2 $
is an invertible in element in $s'(\mZ^d)$. This shows that the Bass
stable rank of $s'(\mZ^d)$ is $1$.
\end{proof}

\begin{remark} 
In fact, a modification of the proof gives a more general result, 
as outlined below.
 
Let $c(\mZ^d)$ denote the ring of all complex valued maps on $\mZ^d$
with the usual pointwise operations, and $\ell^\infty (\mZ^d)$ denote
the subring of $c(\mZ^d)$ consisting of all maps with a bounded
range. Suppose that $R$ is a subring of $c(\mZ^d)$ possessing the
following properties (P1), (P3) and either (P2) or (P$2^\prime$):
\begin{itemize}
\item[(P1)] $\ell^\infty(\mZ^d) \subset R$, 
\item[(P2)] $f\in R \;\Rightarrow \;\overline{f}\in R$. (Here
  $\overline{\;\cdot\;}$ indicates pointwise complex conjugation.)
\item[(P$2^\prime$)] $f\in R\;\Rightarrow \;|f|\in R$.  (Here $|\cdot|$
  denotes taking pointwise complex absolute value.)
\item[(P3)] $|f|\geq \delta>0 \;\Rightarrow \;f \in U_1(R)$.
\end{itemize}
Note that once we have (P1), the properties (P2) and (P$2^\prime$) are
equivalent, thanks to the identity $
\overline{z}e^{i\textrm{Arg}(z)}=|z|
$ 
for any complex number $z$, where $\textrm{Arg}$ denotes the principal
argument of a complex number, living in $(-\pi,\pi]$, and we set $\textrm{Arg}(0):=0$.
  
\smallskip 
  
\noindent {\bf Claim}: $\textrm{bsr}(R)=1$. 
  
\smallskip 
  
\noindent We carry out the same proof as above, and note the following
 key changes in the arguments:
\begin{enumerate}
\item $\bbu_1 \in R$ because $|\bba_1| \in R$, using (P2) or
  (P$2^\prime$). 
 Moreover (P3) shows also that $\bbu_1$ is invertible
  in $R$. Also, then $\bbA_1 \in R $ and $\bbB_1 \in R$. 
\item Let's show that $\widetilde{\bbB}_1\in R$. To this end, write additively
  $\widetilde{\bbB}_1=\bbx + \bbB_1\cdot \bby$, where
  $$
  \quad \quad \bbx(\bbn)\!:=\! 
  \left\{ \!\!\begin{array}{ll} 
          \epsilon &\!\textrm{if } |\bbB_1(\bbn)|\leq \epsilon, \\
           0 &\!\textrm{if } |\bbB_1(\bbn)|> \epsilon 
  \end{array}\!\!\right\} 
  \textrm{ and } 
  \bby(\bbn)\!:=\! 
  \left\{ \!\!\begin{array}{ll} 
          0 &\!\textrm{if } |\bbB_1(\bbn)|\leq \epsilon, \\
          1 &\!\textrm{if } |\bbB_1(\bbn)|> \epsilon 
  \end{array}\!\!\right\} .
  $$
  As $\bbx,\bby$ are bounded, they belong to $R$ by (P1). Thus, since $R$ is a ring, so
  does $\widetilde{\bbB}_1=\bbx + \bbB_1\cdot \bby$.
\item We prove that $\widetilde{\bbB}_1$ is invertible in
  $R$.  To this end, factor multiplicatively
  $\widetilde{\bbB}_1=(1+|\widetilde{\bbB}_1|)\cdot \bbC_1$, where
  $$
  \bbC_1=\frac{\widetilde{\bbB}_1}{1+|\widetilde{\bbB}_1|\phantom{\displaystyle \sum}\!\!\!\!\!\!\!\!\!}.
  $$
  The first factor, $1+|\widetilde{\bbB}_1|$, belongs to $R$, using (P2)
  and the fact that $\widetilde{\bbB}_1\in R$ (shown above). 
  Moreover, $1+|\widetilde{\bbB}_1|$ is invertible in $R$ by (P3).
  
  \noindent Now the function
   $
  x \mapsto \displaystyle \frac{x}{1+x}:(0,\infty)\rightarrow (0,\infty)
  $ 
  is  increasing, and 
  so 
  $$
  \frac{\epsilon}{1+\epsilon} \leq |\bbC_1|\leq 1.
  $$
  The rightmost inequality shows that $\bbC_1\in R$ (as it is bounded), while the 
  leftmost inequality then gives the invertibility of $\bbC_1$ in
  $R$ (using (P3)). The rest of the proof is the same, mutatis
  mutandis, as the proof of Theorem~\ref{theorem_bsr}.
\end{enumerate}
\end{remark}

\section{Topological stable rank of $s'(\mZ^d)$} 
\label{section_tsr}

\begin{proof}[Proof of Theorem~\ref{theorem_tsr}]
  Let $\bba\in s'(\mZ^d)$. We will construct a net
  $(\bba_\epsilon)_{\epsilon>0}$ with index set as the directed set
  $(0,\infty)$ and the usual order of real numbers, of invertible
  elements $\bba_\epsilon$ in $s'(\mZ^d)$ such that
  $(\bba_\epsilon)_{\epsilon>0}$ converges to $\bba$.  Define for
  $\epsilon>0$ and $\bbn\in \mZ^d$
 $$
 \bba_\epsilon(\bbn)
 =\left\{\begin{array}{ll} 
   \epsilon & \textrm{if }|\bba(n)|\leq \epsilon,\\
   \bba(\bbn) & \textrm{if }|\bba(n)|>\epsilon.
   \end{array}\right.
 $$
 Then $|\bba_\epsilon(\bbn)|\geq \epsilon$ for all $\bbn\in \mZ^d$,
 and so $\bba_\epsilon$ is invertible in $s'(\mZ^d)$ for all
 $\epsilon>0$. Moreover, for every $\bbb\in s(\mZ^d)$, we have
 \begin{eqnarray*}
 \Big|\langle (\bba_\epsilon-\bba),\bbb\rangle_{s'(\mZ^d)\times s(\mZ^d)}\Big|
 &=& 
 \Big|\sum_{\bbn\in \mZ^d} (  \bba_\epsilon-\bba)(\bbn)\bbb(\bbn) \Big|
 \\
 &\leq & 
 \sum_{\bbn\in \mZ^d} |\bba_\epsilon(\bbn)-\bba(\bbn)|\cdot |\bbb(\bbn) |
 \\
 & \leq & \sum_{\bbn\in \mZ^d} 
 2\epsilon  \cdot |\bbb(\bbn) |
 \\
 &\leq & 2\epsilon \sum_{\bbn\in \mZ^d} \frac{K}{(1+\nm \bbn \nm)^2}\leq K' \epsilon,
 \end{eqnarray*}
 where $K$ (and $K'$) is a constant (depending on $\bbb$). So the net
 $(\bba_\epsilon)_{\epsilon>0}$ of invertible elements $\bba_\epsilon$
 in $s'(\mZ^d)$ converges in the weak-$\ast$ topology of $s'(\mZ^d)$
 to $\bba$.  Hence $U_1(s'(\mZ^d))$ is dense in $s'(\mZ^d)$, that is,
 the topological stable rank of $ s'(\mZ^d)$ is $1$.
\end{proof}

\begin{remark}
  It is known that for a commutative $Q$-algebra (namely a unital
  topological algebra in which the set of units forms an open set)
  $R$, one has $\textrm{bsr}(R)\leq \textrm{tsr}(R)$ (see for example 
  the proof of \cite[Theorem~2.3]{Rie}). However, it can
  be seen that $s'(\mZ^d)$ is {\em not} a $Q$-algebra (see below), and
  so the result in this section does not render the result in the
  previous section superfluous.
 
  Let us show that the set of units in $s'(\mZ^d)$ does not form an open
  set in the weak-$\ast$ topology of $s'(\mZ^d)$.  Consider the net
  $(\bba_\epsilon)_{\epsilon>0}$, where
  $$
  \bba_\epsilon:=e^{-\epsilon |\bbn|} \quad (\bbn\in \mZ^d). 
  $$
  Then each $\bba_\epsilon$, $\epsilon>0$, is not invertible in
  $s'(\mZ^d)$ because it is easily seen that
  $$
  \neg \Big(\exists \delta>0:\exists K\in \mN:|\bba_\epsilon(\bbn)|\geq \delta (1+\nm\bbn\nm)^{-K}\Big).
  $$
  But the net $(\bba_\epsilon)_{\epsilon>0}$ converges to the
  invertible element $(1)_{\bbn\in \mZ^d}$: indeed for every $\bbb\in
  s(\mZ^d)$, we have
  \begin{eqnarray*}
  | \langle (\bba_\epsilon-\bba),\bbb\rangle_{s'(\mZ^d)\times s(\mZ^d)} |
  &=& 
  \Big| \sum_{\bbn\in \mZ^d} (1-e^{-\epsilon \nm \bbn \nm}) \bbb(\bbn)\Big|
  \\
  &\leq &  \sum_{\bbn\in \mZ^d} \epsilon \nm n\nm |\bbb(\bbn)|\\
  &\leq &\epsilon \sum_{\bbn\in \mZ^d} \nm \bbn\nm \cdot\frac{K}{(1+\nm \bbn\nm)^3}
  =K'\cdot \epsilon,
  \end{eqnarray*}
  for some constant $K$ (and $K'$) depending on $\bbb$. In the above, we have used the
  inequality $0<1-e^{- x}<x$ for $x>0$, which follows from the
  Mean Value Theorem.
\end{remark}

\section{An open problem}

While we have computed the 
 stable ranks of $s'(\mZ^d)\simeq \calD_\mA'(\mR^d)$, 
  the stable ranks of $\calE'(\mR^d)$ do not seem to be known. Via Fourier transformation, 
 and by the Payley-Wiener-Schwartz Theorem \cite[Theorem~7.3.1, p.181]{Ho2},   
  $(\calE'(\mR^d),+,\ast)$  is isomorphic to the ring 
 $E_{\textrm{exp}}$ of entire functions of exponential type:
 $$
 E_{\textrm{exp}}(\mC^d)\!:=\!\Big\{\!f:\!\mC^d\!\rightarrow \mC: 
 \exists C>0 :\!\exists N\in \mN:\!\exists\! M>0:\!|f(\bbz)|\leq \!C
 e^{M\nm \textrm{Im}(\bbz)\nm }\Big\},
$$
equipped with pointwise operations.  Is $\textrm{bsr}(E_{\textrm{exp}}(\mC^d))<\infty$? If so what is it? 
The Bass stable rank of $E_{\textrm{exp}}(\mC)$ can be shown to 
be at least $2$ (see below).

First, we note that every unit in $E_{\textrm{exp}}(\mC)$ has the form 
 $u(z)=e^{a+bz}$ for some constants $a,b$.  This follows directly from the 
 Weierstrass factorisation of such functions, but here is a direct proof:
 writing $u(z)=e^{h(z)}$, where $h$ denotes an entire function, we conclude that $|\textrm{Re}(h(z))|\leq A+B|z|$, 
  but then Schwarz's formula for the disc $|\zeta |=2r$ and $|z|=r$ 
   gives 
   $$
   h(z)=\frac{1}{2\pi i}
   \int_{|\zeta|=2r}\frac{\zeta+z}{\zeta-z} \textrm{Re}(h(\zeta))\frac{ d\zeta}{\zeta} +i\textrm{Im}(h(0))
   $$
   and so, 
   $$
   |h(z)-i\textrm{Im}(h(0))|\leq \frac{3r}{r}\sup_{|\zeta|=2r} |\textrm{Re}(h(\zeta))|\leq 3(A+2rB).
   $$
   Now Cauchy's estimate for the coefficients of a Taylor series 
   shows that $h$ must be a polynomial of degree 1, that is $h(z)=a+bz$ 
   for some $a,b$. 
   
   Finally, let us show that $\textrm{bsr}(E_{\textrm{exp}}(\mC))>1$. Note that 
   $(z-1,z^3)$ is a unimodular pair in 
   $E_{\textrm{exp}}(\mC)$ (with a polynomial solution to the B\'ezout equation),  
   but is not reducible. Assuming the contrary, there are $h\in E_{\textrm{exp}}(\mC)$ 
   and constants $a,b$ such that $z-1+h(z)z^3=e^{a+bz}$. 
   Differentiating twice gives:
$$
h''(z)z^3+6h'(z)z^2+ h(z)6z=b^2 e^{a+bz},
$$
and putting $z=0$ gives $b=0$. So $z-1+h(z)z^3=e^a$ for all $z$. Differentiating this 
gives $1+h'(z) z^3+h(z) 3z^2=0$, and putting $z=0$ now gives the contradiction that $1=0$.

\end{document}